%% file: IHCC.tex
\documentclass[12pt]{amsart}

\usepackage{microtype}

\usepackage{mathpazo} %different font

\usepackage{hyperref}
\hypersetup{
    colorlinks=true,
    linkcolor=Mahogany,      
    urlcolor=cyan,
    citecolor=blue,
}
\usepackage[hyperpageref]{backref}  %Adds References to the page an article is cited on to the bibliography

\usepackage[abbrev,alphabetic]{amsrefs} %Different Bibliography layout

\usepackage{amssymb} 
\usepackage[dvipsnames]{xcolor}
\usepackage{multirow}
\usepackage{pgf}
\usepackage{tikz-cd}
\usepackage[style=english]{csquotes}
\usepackage{url}

\newtheorem{thm}{Theorem}[section] 
\newtheorem*{thm*}{Theorem}

\newtheorem{lem}[thm]{Lemma}

\theoremstyle{definition} 

\newtheorem{ex}[thm]{Example}

\theoremstyle{remark}

\numberwithin{equation}{section}

\newcommand{\codim}[0]{\operatorname{codim}}

\newcommand{\reg}{{\rm{reg}}}
\newcommand{\sing}{{\rm{sing}}}

\newcommand{\Aut}{\mathrm{Aut}}

\newcommand{\Q}{\mathbb{Q}}
\newcommand{\mP}{\mathbb{P}}
\newcommand{\B}{\mathbb{B}}

\makeatletter
\newcommand*\bigcdot{\mathpalette\bigcdot@{.5}}
\newcommand*\bigcdot@[2]{\mathbin{\vcenter{\hbox{\scalebox{#2}{$\m@th#1\bullet$}}}}}
\makeatother

\title{Characterising Ball Quotients through their (higher) Chern Numbers}

\author{Niklas M\"uller}
\address{Department of Mathematics, Universit\"at Freiburg,
Ernst-Zermelo-Str. 1, 79104 Freiburg, Germany.}
\email{{\tt niklas.mueller@math.uni-freiburg.de}}

\date{\today}

\subjclass[2020]{Primary 32Q30, Secondary 14E30, 32Q45.}

\keywords{Uniformisation, Chern classes, minimal varieties, Hermitian symmetric domains}

\begin{document}

%%% Titlesequence

    \begin{abstract}
        In this short note we provide a characterisation of ball quotients among all minimal smooth projective varieties of general type purely in terms of their characteristic numbers. This generalises earlier work of Miyaoka, Yau and Greb--Kebekus--Peternell--Taji.
    \end{abstract}
    
    \maketitle

%%% TOC    

    \setcounter{tocdepth}{1}
    \tableofcontents

%%% Main content
    
    \input{1-Introduction}

\input{2-Preliminaries}

\input{3-StringyEulerNumbers}

%%% Bibliography

\bibliography{IHCC.bib}

 \end{document}

%% file: 1-Introduction.tex
\renewcommand{\thethm}{\thesection.\arabic{thm}}
    \renewcommand{\theequation}{\thesection.\arabic{equation}}

    \section{Introduction}
\label{section-Introduction}

A \emph{ball quotient variety} is a compact complex manifold $X$ whose universal cover is biholomorphic to the complex unit ball $\mathbb{B}^n= \{z\in \mathbb{C}^n \ | \ \|z\|^2 < 1\}$. In other words, $X\cong \mathbb{B}^n/\Lambda$, where $\Lambda \subseteq \Aut(\mathbb{B}^n) = \mathrm{PU}(n,1)$ is a torsion-free and cocompact lattice. In this case, it is well-known that the canonical line bundle $\omega_X$ is positive; in particular, ball quotient varieties are always projective. 

A famous source of examples of ball quotients are fake projective planes, i.e.\ smooth projective surfaces $S$ which are not isomorphic to $\mP^2$ but satisfy $H^i(S, \Q) \cong H^i(\mP^2, \Q)$; see \cites{PrasadYeung_FakeProjectivePlanes, CartwrightSteger_EnumerationOfAllFakeProjectivePlanes} for a complete classification of the corresponding subgroups $\Lambda \subseteq \mathrm{PU}(2,1)$. Following work of Deligne--Mostow \cite{DeligneMostow_BallQuotients}, there has been a surge of interest in finding explicit complex geometric constructions of ball quotient varieties, see \cite{Hunt_GeographyOfComplexSurfacesAnd3Folds} for an overview. Recently, a new example has been discovered by Deroin--March{\'e} \cite[Section 6]{DM_ToledoInvariantsTQFT}. In this context, to show that a given variety is a ball quotient, the following criterion of Miyaoka proved very useful:

\begin{thm*}\emph{(Miyaoka \cite{Miyaoka_ChernNumbersSurfacesGeneralType})}
    Let $X$ be a smooth complex projective surface such that $K_X$ is big and nef. Then $X$ is isomorphic to a quotient of the complex unit ball $\mathbb{B}^2$ if and only if 
    \[
    3c_2(X) - c_1(X)^2= 0.
    \]
\end{thm*}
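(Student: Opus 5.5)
Both directions are handled separately. For the easy direction, suppose $X\cong \mathbb{B}^2/\Lambda$. The Bergman metric on $\mathbb{B}^2$ is $\mathrm{PU}(2,1)$-invariant, so it descends to a Kähler--Einstein metric $\omega$ on $X$ of constant negative holomorphic sectional curvature. For such a metric the curvature tensor is a multiple of $g_{i\bar j}g_{k\bar l}+g_{i\bar l}g_{k\bar j}$, so the traceless part of the curvature vanishes. Chern--Weil theory then gives the Chern forms pointwise as $c_2(\omega)=\tfrac13 c_1(\omega)^2$. Integrating yields $3c_2(X)-c_1(X)^2=0$. (One can also see this via Hirzebruch proportionality with the compact dual $\mathbb{P}^2$, where $c_1^2=9=3c_2$.) Note that $K_X$ is ample here, so the hypotheses of the statement are satisfied.

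For the converse, I would first pass to the canonical model. Since $K_X$ is big and nef, the base point free theorem gives a birational morphism $\pi\colon X\to X_{\mathrm{can}}$ contracting exactly the $(-2)$-curves, that is, the curves $C$ with $K_X\cdot C=0$. Here $X_{\mathrm{can}}$ has only ADE (canonical) singularities and $K_{X_{\mathrm{can}}}$ is ample. By Aubin--Yau, as extended to orbifolds by Kobayashi and Tsuji, $X_{\mathrm{can}}$ carries an orbifold Kähler--Einstein metric with negative Ricci curvature. The orbifold Chern--Weil computation gives
\[
3c_2^{\mathrm{orb}}(X_{\mathrm{can}})-c_1(X_{\mathrm{can}})^2=\int |R^\circ|^2\,dV\ \ge 0,
\]
where $R^\circ$ is the traceless part of the curvature, up to a positive constant. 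The key comparison is then between the orbifold and the smooth second Chern numbers. We have $c_1(X)^2=K_{X_{\mathrm{can}}}^2$, while $c_2(X)=e(X)=c_2^{\mathrm{orb}}(X_{\mathrm{can}})+\sum_p\bigl(e(E_p)-1/|G_p|\bigr)$. Here $E_p$ is the exceptional configuration over the ADE point $p$ and $G_p$ is its local group. Each correction term is strictly positive: $e(E_p)=\mu_p+1\ge2$, while $1/|G_p|\le 1/2$.

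Hence $0=3c_2(X)-c_1(X)^2\ge 3\sum_p\bigl(e(E_p)-1/|G_p|\bigr)$. This forces the absence of singular points, so $X=X_{\mathrm{can}}$ and $K_X$ is ample. It also forces $R^\circ\equiv 0$, so the Kähler--Einstein metric has constant negative holomorphic sectional curvature. By Cartan--Hadamard and the uniqueness of simply connected complete Kähler manifolds of constant holomorphic sectional curvature, the universal cover is biholomorphic to $\mathbb{B}^2$. The deck group then acts by holomorphic isometries, hence as a torsion-free cocompact lattice in $\mathrm{PU}(2,1)$.

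The main obstacle is the step involving the singular canonical model. One needs existence of the orbifold Kähler--Einstein metric together with the validity of the Chern--Weil inequality in the orbifold setting, and then the local Euler number comparison at the ADE points. If I wanted to avoid orbifold Kähler--Einstein theory, my first alternative would be Miyaoka's original algebraic route. That route proves the semistability of $\Omega_X$ with respect to $K_X$ and a Bogomolov-type inequality, and then analyses the equality case. In that approach the hardest step is showing that equality forces the projectively flat structure underlying the uniformisation.
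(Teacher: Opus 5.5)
Your proof is correct. Structurally it is the two-dimensional instance of the mechanism behind the paper's Theorem~\ref{thm-inequalities-higher-Chern-classes}: a Miyaoka--Yau inequality with its equality case on $X_{\mathrm{can}}$, combined with a comparison of the Euler number of the crepant resolution against the orbifold Euler number.

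Your correction term $e(E_p)-1/|G_p|$ is exactly the term $m_i-1/\#G_i$ in Lemma~\ref{lem-euler-characteristic-crepant-resolution}. By the McKay correspondence, the number of conjugacy classes of the binary polyhedral group $G_p\subset\mathrm{SU}(2)$ is $\mu_p+1$. You read this off directly from the $(-2)$-tree, whereas the paper gets it from Batyrev's formula for $e_{\mathrm{Str}}(V/G)$.

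The routes differ in how the pieces are supplied.
\begin{itemize}
\item The paper does not reprove this statement. It is the case $n=k=2$ of Theorem~\ref{thm-inequalities-higher-Chern-classes}, which the paper takes directly from Greb--Kebekus--Peternell--Taji (Higgs sheaves and harmonic metrics on klt spaces, valid in every dimension).
\item In the paper, Lemma~\ref{lem-euler-characteristic-crepant-resolution} is invoked only for $k>2$. By then $X_{\mathrm{can}}$ is already known to be a ball quotient, so it has a global smooth quasi-\'etale cover with which to define $c_n(S)$.
\item You instead use orbifold K\"ahler--Einstein metrics on the canonical surface (Kobayashi, Cheng--Yau, Tsuji) and Satake's orbifold Gauss--Bonnet formula. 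This defines $c_2^{\mathrm{orb}}(X_{\mathrm{can}})$ without any global cover, so you get both conclusions, no singular points and $R^\circ\equiv 0$, from a single inequality.
\end{itemize}

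Your argument is therefore more self-contained and classical, but it is tied to dimension two. The paper's reliance on GKPT is what makes its higher-dimensional induction possible.
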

More generally, it is known by work of Chen--Ogiue \cite{C075}, Aubin \cite{Aub78} and Yau \cite{Yau_KEMetrics} that a smooth projective variety of dimension $n$ with ample canonical bundle $K_X$ is a ball quotient if and only if 
\begin{equation}
    \Big( 2(n+1)c_2(X) - n c_1(X)^2\Big)\cdot K_X^{n-2} = 0.
    \label{eq-GKPT}
\end{equation}
Under the assumption that $K_X$ is merely big and nef, Greb--Kebekus--Peternell--Taji \cite{GKPT_MY_Inequality_Uniformisation_of_Canonical_models} showed that if \eqref{eq-GKPT} holds, then the \emph{canonical model} $X_{\mathrm{can}}$ is a, possibly singular, ball quotient variety. Note, however, that in this case $X$ itself may or may not be isomorphic to a ball quotient \cite[Thm.\ D]{Mul_Thesis}. The primary goal of this note is to extend \cite{GKPT_MY_Inequality_Uniformisation_of_Canonical_models} to give a complete characterization of ball quotients among all minimal smooth projective varieties of general type, purely in terms of their Chern numbers: 
\begin{thm}
    \label{thm-main-result}
    Let $X$ be a smooth complex projective variety of dimension $n$ such that $K_X$ is big and nef. Then $X$ is isomorphic to a quotient of the complex unit ball $\mathbb{B}^n$ if and only if 
    \[
    \left(  (n+1)^i\cdot  c_i(X) -  \binom{n+1}{i} c_1(X)^i \right) \cdot K_X^{n-i} = 0, \qquad \forall i=1,\ldots, n.
    \]
\end{thm}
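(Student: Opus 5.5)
The plan is to reduce to the singular result of Greb--Kebekus--Peternell--Taji and then use the higher Chern number conditions to show that the canonical model has no singularities at all. The ``only if'' direction is Hirzebruch proportionality. If $X=\mathbb{B}^n/\Lambda$, then $c(X)$ is proportional to the Chern class of $\mP^n$, with $c_1(X)$ corresponding to $-(n+1)H$ in the compact dual. This gives $c_i(X)=\binom{n+1}{i}\bigl(c_1(X)/(n+1)\bigr)^i$ in cohomology (the Kähler--Einstein curvature tensor has constant holomorphic sectional curvature). Multiplying by $(n+1)^i$ and by $K_X^{n-i}$ yields the stated identities.

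For the converse, the $i=2$ identity (with $i=1$ trivial) is equivalent to \eqref{eq-GKPT}. So \cite{GKPT_MY_Inequality_Uniformisation_of_Canonical_models} shows that $X_{\mathrm{can}}=\mathbb{B}^n/\Gamma$ for some cocompact lattice $\Gamma$, possibly with torsion. In particular $X_{\mathrm{can}}$ has only quotient singularities, and the morphism $\pi\colon X\to X_{\mathrm{can}}$ is crepant because $K_X=\pi^*K_{X_{\mathrm{can}}}$. The goal is to show that $X_{\mathrm{can}}$ is smooth, since then $\pi$, being crepant and birational onto a smooth variety, is an isomorphism. Write $\hat c_i$ for the orbifold (Mumford) Chern classes of $X_{\mathrm{can}}$. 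By orbifold proportionality, $\hat c_i\cdot K^{n-i}=\binom{n+1}{i}(n+1)^{-i}(-1)^i K^n$, which are exactly the numbers demanded in the theorem. So the hypotheses say
\[
\bigl(c_i(X)-\pi^*\hat c_i\bigr)\cdot K_X^{n-i}=0 \qquad \text{for all } i.
\]

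Next I will suppose $X_{\mathrm{can}}$ is singular and let $k\ge 2$ be the codimension of its singular locus. Because $K_{X_{\mathrm{can}}}$ is ample, $K^{n-k}$ may be represented by a general complete intersection curve-like cycle: a general complete intersection $Z$ of $n-k$ very ample divisors has dimension $k$ and meets $\mathrm{Sing}(X_{\mathrm{can}})$ only in finitely many points. These points lie on the $(n-k)$-dimensional components, where the singularity is locally a product $\mathbb{C}^{n-k}\times \mathbb{C}^k/G$ with $G\neq 1$ acting without quasi-reflections (quasi-reflections are absent since the branching is in codimension $\ge 2$). Cutting down reduces the computation of $(c_k(X)-\pi^*\hat c_k)\cdot K_X^{n-k}$ to a sum of local contributions at these isolated points of the $k$-dimensional slice. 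At such a point the local contribution of $c_k$ of a crepant resolution is the stringy Euler number of $\mathbb{C}^k/G$, which by Batyrev's theorem is the number of conjugacy classes of $G$, while the orbifold contribution is $1/|G|$. Hence each contribution equals $\#\mathrm{Conj}(G)-1/|G|>0$, and the $k$-th identity fails, a contradiction. For $k=n$ this is literally the comparison $e(X)=e_{\mathrm{st}}(X_{\mathrm{can}})>e_{\mathrm{orb}}(X_{\mathrm{can}})$.

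The main obstacle is justifying this localisation rigorously in the middle step. One has to check that in the difference $c_k(X)-\pi^*\hat c_k$, paired with $K^{n-k}$, everything away from the codimension-$k$ singular stratum cancels. One also has to show that the contribution along a component equals its local stringy-minus-orbifold Euler number times the degree. My plan is to restrict to the general slice $Z$ and its preimage $\pi^{-1}(Z)$, which is a crepant resolution of $Z$ by generic smoothness. Then normal-bundle factors of $c(X)$ along $\pi^{-1}(Z)$ pull back from $X_{\mathrm{can}}$, which should reduce the claim to the top-degree statement $c_k(\pi^{-1}Z)-\hat c_k(Z)=e_{\mathrm{st}}(Z)-e_{\mathrm{orb}}(Z)$ for an isolated-quotient-singularity variety. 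The latter follows from Batyrev's formula for stringy Euler numbers together with the orbifold Gauss--Bonnet formula, each compared stratum-wise.
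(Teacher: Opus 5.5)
Your proposal is correct and follows essentially the same route as the paper. The common steps are: GKPT applied to the $i=2$ identity, a general $k$-dimensional slice cut out by members of $|\ell K_{X_{\mathrm{can}}}|$, the normal bundle sequence to reduce to top Chern classes of the slice, and Batyrev's count $\#\mathrm{Conj}(G)$ against $1/\#G$ at the isolated singular points, which is exactly Lemma~\ref{lem-euler-characteristic-crepant-resolution}. The differences are only organisational. You argue by contradiction at $k=\operatorname{codim}\operatorname{Sing}(X_{\mathrm{can}})$ rather than by induction via Theorem~\ref{thm-inequalities-higher-Chern-classes}. You also work with orbifold Chern classes, where the paper uses a quasi-\'etale cover of $X_{\mathrm{can}}$ by a smooth ball quotient, which makes orbifold proportionality and the orbifold Euler number immediate.
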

The author is grateful towards Ben McKay for pointing out that Theorem \ref{thm-main-result} is intimately related to the existence of holomorphic normal projective connections. Indeed, if $X$ is a complex manifold which admits such a connection, then the series of equalities in Theorem \ref{thm-main-result} holds, see \cite[Thm.\ 3.1]{KO80}. Under this additional assumption, the conclusion that $X\cong \B^n$ was obtained previously by Jahnke--Radloff \cite{JR02}. On the other hand, the assumptions in Theorem \ref{thm-main-result} are a priori much weaker than the existence of such a connection.

We will deduce Theorem \ref{thm-main-result} from the following more general result:
\begin{thm}
\label{thm-inequalities-higher-Chern-classes}
Let $X$ be a smooth complex projective variety of dimension $n$ such that $K_X$ is big and nef. Assume, moreover, that 
    \begin{equation}
        c_i(X)\cdot K_X^{n-i} = \frac{1}{(n+1)^i}\cdot \binom{n+1}{i} \cdot c_1(X)^i\cdot K_X^{n-i}, \qquad \forall i=1, \ldots, k-1,
           \label{eq-uniformisation-canonical-models-1}
    \end{equation}
    for some integer $2\leq k \leq n$. Then
    \begin{equation}
        c_k(X)\cdot K_X^{n-k} \geq \frac{1}{(n+1)^k}\cdot \binom{n+1}{k} \cdot  c_1(X)^k\cdot K_X^{n-k}.
           \label{eq-uniformisation-canonical-models-2}
    \end{equation}
    Moreover, the equality holds in \eqref{eq-uniformisation-canonical-models-2} if and only if $X_{\mathrm{can}}\cong \B^n/\Lambda$ is isomorphic to a (possibly singular) ball quotient variety and 
    the natural birational map
    \[
       f\colon X\rightarrow X_{\mathrm{can}}
    \]
    is an isomorphism over an open subset $U\subseteq X_{\mathrm{can}}$ with $\codim (X_{\mathrm{can}}\setminus U) \geq k+1$.
\end{thm}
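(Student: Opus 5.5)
The plan is to pass to the canonical model $Y:=X_{\mathrm{can}}$ and compare the Chern numbers of $X$ with the orbifold Chern numbers of $Y$, using stringy Euler numbers of suitable complete intersection subvarieties. Since $K_X$ is big and nef, $Y$ has canonical singularities and $f\colon X\to Y$ is crepant, so $K_X=f^*K_Y$. Some multiple $mK_Y$ is very ample, and all numbers $c_i(X)\cdot K_X^{n-i}$ can be computed on a general complete intersection. Concretely, for $H_1,\dots,H_{n-k}\in|mK_Y|$ general, put $Y'=Y\cap H_1\cap\dots\cap H_{n-k}$ and $X'=f^{-1}(Y')$. Then $X'$ is smooth, $f|_{X'}\colon X'\to Y'$ is crepant, and by adjunction $c(X')=c(X)|_{X'}/(1+mK_X)^{n-k}$. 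Hence $c_k(X)\cdot K_X^{n-k}$ is a universal combination of $m^{-(n-k)}e(X')$ and the numbers $c_i(X)\cdot c_1(X)^{k-i}\cdot K_X^{n-k}$ for $i<k$. The analogous formula holds on the smooth-in-codimension-two locus of $Y$ with orbifold Chern classes and the orbifold Euler number $e_{\mathrm{orb}}(Y')$.

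\textbf{Step 1: uniformisation from the case $i=2$.} Hypothesis \eqref{eq-uniformisation-canonical-models-1} for $i=2$, or the inequality itself when $k=2$, is the Miyaoka--Yau situation. Combining it with the Miyaoka--Yau inequality for orbifold Chern classes on $Y$ from \cite{GKPT_MY_Inequality_Uniformisation_of_Canonical_models}, together with $c_2(X)\cdot K_X^{n-2}\ge \tilde c_2(Y)\cdot K_Y^{n-2}$, gives two things. For $k=2$ it gives the inequality directly. For $k\ge3$ it forces equality in Miyaoka--Yau, so $Y\cong\B^n/\Lambda$ is a ball quotient with quotient singularities.

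\textbf{Step 2: orbifold Chern numbers of $Y$.} On the ball quotient $Y$ the hyperbolic metric has constant holomorphic sectional curvature, so its total Chern form is pointwise $(1+\tfrac{c_1}{n+1})^{n+1}$. Thus the orbifold Chern numbers satisfy $\tilde c_i(Y)\cdot K_Y^{n-i}=\frac{1}{(n+1)^i}\binom{n+1}{i}c_1^i\cdot K_Y^{n-i}$ for every $i$. Feeding this and \eqref{eq-uniformisation-canonical-models-1} into the adjunction formula of the first paragraph, the lower-order terms cancel, and one is left with
\[
c_k(X)\cdot K_X^{n-k}-\tfrac{1}{(n+1)^k}\tbinom{n+1}{k}c_1(X)^k\cdot K_X^{n-k}=\pm\, m^{-(n-k)}\bigl(e(X')-e_{\mathrm{orb}}(Y')\bigr),
\]
with the sign $(-1)^k$ from $c_1=-K$ absorbed so that the claimed inequality becomes $e(X')\ge e_{\mathrm{orb}}(Y')$.

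\textbf{Step 3: the stringy comparison (the main obstacle).} Since $X'\to Y'$ is crepant, $e(X')=e_{\mathrm{str}}(Y')$. Moreover $Y'$ has isolated-type quotient singularities $\mathbb{C}^k/G$ along a stratification. Batyrev's orbifold formula gives local stringy contributions equal to the number of conjugacy classes of $G$, which is $\ge 1/|G|$, the orbifold contribution. The contributions are equal only when $G$ is trivial, and in that case the crepant $f$ is an isomorphism there. Summing over strata yields $e_{\mathrm{str}}(Y')\ge e_{\mathrm{orb}}(Y')$, with equality iff $Y'$ is smooth and $f$ is an isomorphism over $Y'$. A general $Y'$ of dimension $k$ misses exactly the loci of codimension $\ge k+1$, which gives the codimension statement and the converse. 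The delicate points are:
\begin{itemize}
\item handling the stratification and the sign carefully;
\item justifying that stringy and orbifold Euler numbers are additive along strata;
\item the case where $f$ is not an isomorphism over smooth points of $Y$. There a crepant map onto a smooth point must be an isomorphism, which settles it.
\end{itemize}
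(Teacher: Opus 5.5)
Your Step 2 is sound. It even disposes of the lower-order terms differently from the paper: you cancel them using hypothesis \eqref{eq-uniformisation-canonical-models-1} together with proportionality on a smooth quasi-\'etale cover $B'\to Y$. The paper instead identifies $c_{k-i}(Z)\cdot (K_X|_Z)^i$ with $c_{k-i}(S')\cdot(K_{B'}|_{S'})^i/d$ geometrically.

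The genuine gap is in Step 3. Stratifying by isotropy type gives
\[
e_{\mathrm{str}}(Y')-e_{\mathrm{orb}}(Y')=\sum_\Sigma e(\Sigma\cap Y')\,\bigl(\#\mathrm{conj}(G_\Sigma)-1/|G_\Sigma|\bigr).
\]
The bracket is positive for non-trivial $G_\Sigma$, but $e(\Sigma\cap Y')$ need not be. Suppose $\operatorname{Sing}(Y)$ has a stratum of dimension $s>n-k$. Then $\Sigma\cap Y'$ is a positive-dimensional complete intersection of high-degree hypersurfaces in it. For $m\gg0$ its Euler number has sign $(-1)^{s-(n-k)}$ and large absolute value; think of a singular curve of large genus in $Y'$ when $s=n-k+1$. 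So ``summing over strata'' gives neither the inequality nor the characterisation of equality, since terms of opposite sign could cancel, unless $Y'$ has only isolated singular points. Nothing in Steps 1--2 supplies this. From the $i=2$ equality you extract only that $Y$ is a ball quotient, and you use the other equalities only to cancel lower-order terms.

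The missing idea is that $f$ is an isomorphism off a subset of $Y$ of codimension $\geq k$; in particular $\codim\operatorname{Sing}(Y)\geq k$, as $X$ is smooth. The paper gets this by induction on $k$. The hypothesis at $i=k-1$ is exactly the equality case of the statement for $k-1$, and starting from the GKPT case $k=2$ this yields the codimension bound. A general $k$-dimensional section $S$ then has finitely many isolated quotient singularities $x_i$. Each contributes $m_i-1/\#G_i\geq 0$, with equality iff $G_i=\{1\}$; this is Lemma \ref{lem-euler-characteristic-crepant-resolution}. You should restructure your argument as such an induction.

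Alternatively, you could use that the left-hand side of your Step 2 identity is independent of $m$. A singular stratum of dimension $s>n-k$ would contribute a non-zero term of order $m^{s-(n-k)}$, which forces $s\leq n-k$. That argument would need to be made carefully, and it is not in your proposal.
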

Note that \eqref{eq-uniformisation-canonical-models-1} is always satisfied when $k=2$; in this case, Theorem \ref{thm-inequalities-higher-Chern-classes} coincides precisely with \cite[Thm.\ 1.1, Thm.\ 1.2]{GKPT_MY_Inequality_Uniformisation_of_Canonical_models}. 

In recent years, inequalities between the first and second Chern class of  projective varieties have been thoroughly investigated, see for example \cite{GKT_OverviewUniformisation} or \cite[Introduction]{Mul_Thesis} for a comprehensive survey. However, little is known so far concerning the higher Chern classes. The boundedness of Chern classes of algebraic varieties of general type has been investigated in increasing generality by Kollár--Matsusaka \cite{KollarMatsusaka_RRInequalities}, Hunt \cite{Hunt_GeographyOfComplexSurfacesAnd3Folds}, Lu \cite{LU_RRTypeInequalities}, Du--Sun \cite{DuSun_ChernClassInequalities}, Ping--Zheng \cite{PF_ChernCLassesPolarisedManifolds} and Lu--Xiao \cite{LuXiao_InequalitiesOfRiemannRochType}; in particular, it is known \cite{DuSun_ChernClassInequalities} that there exists a constant $C_n>0$ such that
\[
\left| \frac{c_\lambda(X)}{c_1(X)^n}\right| \leq C_n
\]
for any smooth projective variety $X$ of dimension $n$ such that $K_X$ (or $-K_X$) is ample and any monomial Chern class of top degree $c_\lambda(X)$. However, the works above do not concern themselves with the question of finding the optimal value for $C_n$. Note that examples of Schreieder--Tasin \cite{SchreiederTasin_ChernClassesUnbounded} show that the boundedness fails if one does not require positivity of the canonical bundle, even when fixing the underlying diffeomorphism type of $X$. 

On the other hand, for threefolds $X$ fibred in minimal smooth projective surfaces, Lu--Tan--Zuo \cite{LTZ_CanonicalClassInequalityFibredThreefolds} showed that $18c_3(X) > c_1(X)^3$, while for smooth projective threefolds with ample canonical bundle $\omega_X$, Sun \cite{Sun_bogomolovtypeinequalitiesfrobenius} recently showed that
\[
ch_3(\Omega_X \otimes \omega_X^{\otimes 6}) \leq \frac{19^3}{54} \cdot K_X^3.
\]
The author is unaware of any further research on the behaviour of the higher Chern classes of minimal varieties of general type.
It seems interesting to further investigate this subject.

Throughout this text, we work over the field $\mathbb{C}$ of complex numbers. We employ the standard notions and conventions from \cite{kollarMori_BirationalGeometry, lazarsfeld_PositivityI}.

\subsection*{Acknowledgments}

I am grateful towards Daniel Greb for many interesting discussions on Chern classes and for his support. Moreover, I would like to thank Marc Levine for suggesting me to learn about stringy invariants.
The hospitality of Stefan Kebekus and the stimulative working environment at the Universit\"at Freiburg, where parts of this article were written, is much appreciated. 
Finally, I would like to thank the referee for their careful reading of this text and for their extremely helpful suggestions!

During the creation of this article, I was partially supported by the DFG research training group 2553 ‘Symmetries and classifying spaces:
analytic, arithmetic and derived’.

%% file: 2-Preliminaries.tex
\section{Stringy Euler number}
    \label{sec-preliminaries}

    In this section, we recall the definition of the so-called \emph{stringy Euler number}, which will be useful in proving Theorem \ref{thm-inequalities-higher-Chern-classes}. The concept of stringy invariants first arose in physics \cite{DHVW_StringsOnOrbifoldsI, DHVW_StringsOnOrbifoldsII, ZaslowStringyInvariants} and was later adapted by Batyrev \cite{Batyrev_MotivicIntegrationStringyEulerNumbers} to the context of birational geometry: Let $X$ be a smooth algebraic variety and let $D$ be a $\Q$-divisor on $X$ with simple normal crossing support (we call $(X, D)$ a \emph{log smooth pair}). Assume that $(X, D)$ is sub-klt \cite[Def.\ 2.34]{kollarMori_BirationalGeometry}; in other words, writing $D = \sum_{i=1}^s d_i \cdot D_i$, we assume that $d_i<1$.
    For any subset $I\subseteq \{1, \ldots, s\}$ denote
        \[
        D_I = \left\{ 
        \begin{matrix}
            \bigcap_{i\in I} D_i,& I \neq \emptyset \\
            X, & I = \emptyset
        \end{matrix}
        \right.
        \quad \mathrm{and}\quad
        D^\circ_I = D_I\setminus \bigcup_{i \in \{1, \ldots, s\}\setminus I} D_I \cap D_i.
    \]
    The \emph{stringy Euler number} of $(X, D)$ is the rational number
    \[
    e_{\mathrm{Str}}(X, D) := \sum_{I\subseteq \{1, \ldots, s\}} \left(\prod_{i\in I} \frac{1}{1-d_i}\right) \ e(D_I^\circ),
    \]
    where $e(D_I^\circ)$ denotes the usual topological Euler characteristic of $D_I^\circ$. 
    \begin{ex}
        Let $X$ be a smooth variety. Then $e_{\mathrm{Str}}(X) := e_{\mathrm{Str}}(X, 0) = e(X)$, cf.\  \cite[p.\ 142]{Fulton_ToricVarieties}.
    \end{ex}
    The definition of stringy Euler number can be extended to arbitrary sub-klt pairs using the following fundamental Theorem of Batyrev:

    \begin{thm}\emph{(Batyrev \cite[Thm.\ 1.5]{Batyrev_MotivicIntegrationStringyEulerNumbers})}
    \label{thm-stringy-euler-number-welldefined}
        Let $\phi\colon (X_1, D_1) \rightarrow (X_2, D_2)$ be a proper birational morphism between log smooth sub-klt pairs such that $K_{X_1} + D_1 \sim_{\mathbb{Q}} \phi^*(K_{X_2}+D_2)$.
        Then
        \[
        e_{\mathrm{Str}}(X_1, D_1) = e_{\mathrm{Str}}(X_2, D_2).
        \]
    \end{thm}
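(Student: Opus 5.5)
The plan is to reduce, via the weak factorisation theorem, to the case of a single blow-up along a smooth centre that is in normal crossing position with the boundary. That case then becomes a direct Euler characteristic computation on the fibres of the exceptional divisor. The alternative is Batyrev's original route through motivic integration and the change of variables formula; I would use factorisation because it keeps everything elementary.

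\emph{Step 1 (reduction).} Let $\Delta := \operatorname{Supp} D_1 \cup \operatorname{Exc}(\phi)$, after enlarging $D_2$ by components with coefficient $0$ so that $\phi_*$ of everything is snc on $X_2$. The weak factorisation theorem of Abramovich--Karu--Matsuki--W{\l}odarczyk, in its form compatible with snc divisors, applies to $\phi$. It gives a chain $X_1 = Y_0 \dashrightarrow Y_1 \dashrightarrow \cdots \dashrightarrow Y_N = X_2$ in which:
\begin{itemize}
\item each step is a blow-up or blow-down along a smooth centre;
\item each centre has normal crossings with the total transform $B_k$ of the boundary;
\item each $(Y_k,B_k)$ is snc;
\item each $Y_k$ maps to $X_2$ by a proper birational morphism.
\end{itemize}
On each $Y_k$ define $D_{Y_k}$ by the crepant condition $K_{Y_k}+D_{Y_k} \sim_{\Q} \psi_k^*(K_{X_2}+D_2)$. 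Its support lies in $B_k$, so $(Y_k, D_{Y_k})$ is log smooth. Since $(X_2,D_2)$ is a log smooth sub-klt pair, every divisor over $X_2$ has discrepancy $>-1$, so each $(Y_k,D_{Y_k})$ is again sub-klt. Consecutive pairs are crepant to each other, so it suffices to prove the theorem for a single blow-up $\pi\colon X' = \mathrm{Bl}_Z X \to X$.

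\emph{Step 2 (blow-up formula).} Let $Z$ have codimension $c$. Let $J$ index the components $D_j$ with $Z \subseteq D_j$, and set $m := |J| \le c$; every other $D_i$ meets $Z$ transversally. Then $D' = \pi^{-1}_*D + d_E E$ with
\[
d_E = \sum_{j\in J} d_j - (c-1) < 1 .
\]
Away from $Z$ nothing changes. Over each stratum $D_I^\circ \cap Z$ (necessarily $I \supseteq J$), the map $E \to Z$ is a Zariski-locally trivial $\mP^{c-1}$-bundle. Its fibre is stratified by the coordinate hyperplanes $H_j$, $j\in J$, which are the traces of the strict transforms of the $D_j$. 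By multiplicativity of $e$ in such fibrations, it suffices to prove the fibre identity
\[
\frac{1}{1-d_E}\sum_{K\subseteq J} \Bigl(\prod_{j\in K} a_j\Bigr)\, e(S_K) = \prod_{j\in J} a_j,\qquad a_j := \frac{1}{1-d_j},
\]
where $S_K \subseteq \mP^{c-1}$ is the locus where $x_j = 0$ exactly for $j\in K$ among $j \in J$.

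\emph{Step 3 (fibre computation).} For $K \subsetneq J$, the stratum $S_K$ is $(\mathbb{C}^*)^{m-|K|-1}\times \mathbb{C}^{c-m}$. Its Euler characteristic is $1$ if $|K| = m-1$ and $0$ otherwise. For $K = J$ the stratum is $\mP^{c-m-1}$, with Euler characteristic $c-m$. The left-hand sum is therefore
\[
\prod_{j\in J} a_j\Bigl(\sum_{j\in J}(1-d_j) + c - m\Bigr) = \prod_{j\in J} a_j\,(1-d_E),
\]
which gives the identity.

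The main obstacle is Step 1, not the computation in Steps 2--3. One has to check that the factorisation can be chosen so that every intermediate pair is log smooth with centres in normal crossing position relative to the boundary. One also has to ensure that the coefficient-zero components added to $D_2$ do not disturb the sub-klt condition or the stringy count; they do not, since they contribute weight $1$. If that version of weak factorisation is unavailable, I would fall back on motivic integration: express $e_{\mathrm{Str}}$ as the Euler characteristic specialisation of $\int_{\mathcal{L}(X)} \mathbb{L}^{-\operatorname{ord} D}$, and apply Kontsevich's change of variables formula to $\phi$.
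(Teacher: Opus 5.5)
Your approach is genuinely different from the paper's and can be made into a complete proof, but Step 1 has a real gap as written. Steps 2 and 3 are correct. The crepant coefficient $d_E=\sum_{j\in J}d_j-(c-1)<1$ is right, as are the Euler characteristics of the strata $S_K$ and the identity $\sum_{j\in J}(1-d_j)+c-m=1-d_E$, including the case $J=\emptyset$. Coefficient-zero components are indeed harmless, by additivity of $e$.

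\textbf{The gap in Step 1.} You flag Step 1 yourself, but the issue is more than bookkeeping. The snc form of the Abramovich--Karu--Matsuki--W{\l}odarczyk theorem needs an open set $U$, over which $\phi$ is an isomorphism, such that \emph{both} $X_1\setminus U$ and $X_2\setminus U$ are snc divisors. Your choice does not provide one, for two reasons.

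On $X_2$, the set $\phi(\operatorname{Exc}\phi)$ has codimension $\geq 2$, and $\phi_*$ kills exceptional divisors. So adding coefficient-zero components to $D_2$ ``so that $\phi_*$ of everything is snc'' does not make $X_2\setminus U$ a divisor. You would have to add a divisor through $\phi(\operatorname{Exc}\phi)$, and you control neither its position relative to $\operatorname{Supp}D_2$ nor that of its preimage on $X_1$.

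On $X_1$, the set $\operatorname{Supp}D_1\cup\operatorname{Exc}(\phi)$ need not be snc. Exceptional divisors of discrepancy $0$ have coefficient $0$ in $D_1$, so log smoothness of $(X_1,D_1)$ says nothing about them. Here is an example:
\begin{itemize}
\item Take $X_2=\mathbb{C}^3$ with $D_2=\frac12\{x=0\}+\frac12\{y=0\}$.
\item Blow up the $z$-axis; its exceptional divisor $E$ gets coefficient $0$.
\item Then blow up a smooth curve that meets $E$ with contact of order two at one point and is transverse to (or disjoint from) the strict transforms $H_1,H_2$ of the two planes.
\end{itemize}
The crepant boundary $\frac12H_1+\frac12H_2-F$, with $F$ the second exceptional divisor, is snc. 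However, the strict transform of $E$ acquires an $A_1$ singularity. This is exactly the subtlety behind the paper's remark that Batyrev's argument only covers snc exceptional loci.

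\textbf{The repair.} It uses your own Steps 2--3.
\begin{enumerate}
\item Put $U:=X_2\setminus\bigl(\operatorname{Supp}D_2\cup\phi(\operatorname{Exc}\phi)\bigr)$.
\item By principalisation with an snc boundary, there are sequences of blow-ups $X_2'\to X_2$ and $X_1'\to X_1$ with smooth centres lying over $X_2\setminus U$. The centres have normal crossings with the total transform of $\operatorname{Supp}D_2$, resp.\ $\operatorname{Supp}D_1$. Afterwards the complement of $U$ is an snc divisor on both $X_2'$ and $X_1'$.
\item Equipped with their crepant boundaries, these are log smooth sub-klt pairs. By Steps 2--3 the stringy Euler number does not change.
\item Apply weak factorisation to $X_1'\dashrightarrow X_2'$. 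Use the version for varieties proper over a base, here $X_2$, since the $X_i$ need not be complete.
\item On every intermediate $Y_k$, the set $Y_k\setminus U$ is snc and contains the support of the crepant boundary, and each blow-up centre has normal crossings with it. Your single blow-up computation then finishes the proof.
\end{enumerate}

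\textbf{Comparison with the paper.} The paper does not reprove the statement. It quotes Batyrev's proof by non-Archimedean (motivic) integration, i.e.\ the change of variables formula. That handles any crepant morphism with snc exceptional locus in one stroke. The general case is reduced to it by passing to a higher resolution, or alternatively obtained from de Fernex--Lupercio--Nevins--Uribe. This route needs no factorisation theorem. It also yields Batyrev's formula $e_{\mathrm{Str}}(V/G)=\#\{\text{conjugacy classes of } G\}$, which is what Lemma~\ref{lem-euler-characteristic-crepant-resolution} actually uses. Your route replaces the integration machinery with the deep, black-box weak factorisation theorem plus a completely elementary fibre computation. Once Step 1 is repaired it proves the invariance cleanly, but it does not by itself give the quotient-singularity formula.
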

    Indeed, given a sub-klt pair $(X, D)$, choose a proper, birational morphism $\phi\colon (\widetilde{X}, \widetilde{D}) \rightarrow (X, D)$ such that $(\widetilde{X}, \widetilde{D})$ is log smooth and such that $K_{\widetilde{X}}+ \widetilde{D} \sim_{\mathbb{Q}} \phi^*(K_{X} + D)$. Then the rational number
    \[
        e_{\mathrm{Str}}(X, D) := e_{\mathrm{Str}}(\widetilde{X}, \widetilde{D})
    \]
   is called the \emph{stringy Euler number} of $(X, D)$; by Theorem \ref{thm-stringy-euler-number-welldefined}, it is well-defined and independent of the choice of $\phi$.
    
    Strictly speaking, Batyrev only proves Theorem \ref{thm-stringy-euler-number-welldefined} in case the exceptional locus of $\phi$ is a divisor with simple normal crossings. The general case is readily reduced to this one by passing to a higher resolution. Alternatively, the statement of Theorem \ref{thm-stringy-euler-number-welldefined} follows from \cite[Thm.\ 0.1]{FLNU_StringyChernClasses}.

%% file: 3-StringyEulerNumbers.tex
\section{Proof of main results}

\label{sec-Inequalities-higher-chern-classes}

In this section, we prove Theorems \ref{thm-main-result} and  \ref{thm-inequalities-higher-Chern-classes}. The following Lemma, which is proved using the concept of stringy Euler numbers, is the key ingredient:
\begin{lem}
    \label{lem-euler-characteristic-crepant-resolution}
        Let $S$ be a normal projective variety of dimension $n$ with isolated quotient singularities only. Assume that there exists a finite quasi-\'etale Galois cover $\pi\colon S'\rightarrow S$ by a smooth variety $S'$. Assume, moreover, that there exists a proper birational morphism $f\colon Z \rightarrow S$ such that $Z$ is a smooth variety and such that $K_Z \sim_{\mathbb{Q}} f^*K_S$. Then
        \[
        c_n\left(Z\right) \geq c_n(S) := \frac{c_n\big(S'\big)}{\deg \pi},
        \]
        with equality if and only if $S$ is smooth. In this case, $f\colon Z \rightarrow S$ is an isomorphism.
\end{lem}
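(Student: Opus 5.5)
Since $K_Z \sim_{\mathbb{Q}} f^*K_S$, the pair $(Z,0)$ is crepant over $(S,0)$. By Theorem \ref{thm-stringy-euler-number-welldefined}, $e_{\mathrm{Str}}(Z,0)=e_{\mathrm{Str}}(S,0)$, and the left side is $e(Z)=c_n(Z)$. So it suffices to compute $e_{\mathrm{Str}}(S)$ in two ways and compare the answers with the orbifold Euler number $c_n(S)=c_n(S')/\deg\pi$. Here $S$ has klt singularities, being a quotient-singular variety.

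\textbf{Step 1 (orbifold Euler number).} Let $p_1,\dots,p_r$ be the isolated singular points and $G_j$ the local groups. Since $\pi$ is quasi-\'etale and $S'$ is smooth, $\pi$ is \'etale over $S_{\reg}$. Over $p_j$, each point of $S'$ has stabiliser isomorphic to $G_j$, so the fibre has $\deg\pi/|G_j|$ points. Additivity of the Euler characteristic then gives
\[
\frac{c_n(S')}{\deg\pi}=\frac{e(S')}{\deg\pi}=e(S_{\reg})+\sum_j \frac{1}{|G_j|}.
\]

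\textbf{Step 2 (stringy Euler number).} Stringy Euler numbers are local over $S$: with a log resolution, $e_{\mathrm{Str}}(S)$ equals $e(S_{\reg})$ plus the contributions of the fibres over the $p_j$. Each contribution is the stringy Euler number of the germ $(\mathbb{C}^n/G_j,0)$. By Batyrev's computation of stringy invariants of quotient singularities (the McKay correspondence), this local contribution equals the number of conjugacy classes of $G_j$. Alternatively, one can stay within the paper's framework by taking a $G_j$-equivariant resolution of $\mathbb{C}^n$ and comparing via Theorem \ref{thm-stringy-euler-number-welldefined}. Combining,
\[
c_n(Z)=e_{\mathrm{Str}}(S)=e(S_{\reg})+\sum_j \#\mathrm{Conj}(G_j)\;\ge\; e(S_{\reg})+\sum_j\frac{1}{|G_j|}=c_n(S),
\]
because $\#\mathrm{Conj}(G_j)\ge 1\ge 1/|G_j|$. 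Equality forces $|G_j|=1$ for all $j$, i.e.\ $S$ is smooth. Conversely, if $S$ is smooth then $\pi$ is \'etale and both sides equal $e(S)$.

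\textbf{Step 3 (isomorphism).} If $S$ is smooth and $f$ is crepant with $K_Z=f^*K_S$, then the relative canonical divisor $K_{Z/S}$ is zero. For a birational morphism to a smooth variety, every exceptional divisor has positive discrepancy, so $f$ has no exceptional divisors. By van der Waerden purity for a birational morphism onto a smooth (hence factorial) variety, the exceptional locus is of pure codimension one, hence empty, and $f$ is an isomorphism.

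The main obstacle is justifying in Step 2 that the local stringy contribution of $\mathbb{C}^n/G$ equals $\#\mathrm{Conj}(G)$, or at least that it is $\ge 1$. This is the step I would focus on. If quoting Batyrev's quotient-singularity formula is not allowed, the fallback is a cruder bound. Take a log resolution $\mu\colon\widetilde S\to S$ whose fibres over the $p_j$ are snc divisors $E=\sum E_i$ with discrepancies $a_i>-1$. The contribution over $p_j$ is then $\sum_{I\ne\emptyset}\prod_{i\in I}\frac{1}{1+a_i}e(E_I^\circ)$. One would have to show this exceeds $1/|G_j|$ for nontrivial $G_j$, which is not obvious from the formula itself, since individual Euler characteristics may be negative. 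This is why the McKay-type identification is the natural route.
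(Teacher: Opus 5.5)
Your proposal is correct and is essentially the paper's proof. It has the same identification $c_n(Z)=e(Z)=e_{\mathrm{Str}}(S)$, the same count $c_n(S')/\deg\pi=e(S_{\reg})+\sum_j 1/|G_j|$, and Batyrev's formula $\#\mathrm{Conj}(G_j)$ for the local contributions; the paper's deformation retraction of $Z_i$ onto $E_i$ plays the role of your claim that the local contribution equals $e_{\mathrm{Str}}(\mathbb{C}^n/G_j)$, i.e.\ that $e\bigl((\mathbb{C}^n\setminus\{0\})/G_j\bigr)=0$. The final isomorphism uses the same discrepancy-plus-purity argument. Drop your suggested alternative via a $G_j$-equivariant resolution and Theorem~\ref{thm-stringy-euler-number-welldefined}: it does not apply, since $\mathbb{C}^n\to\mathbb{C}^n/G_j$ is not birational, so quoting Batyrev's quotient formula, as the paper does, is the right step.
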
    
    \begin{proof}
        Denote by $G$ the Galois group of $\pi\colon S'\rightarrow S$ and set $d := \deg\pi$.
        By the \emph{Poincar\'e--Hopf theorem} \cite[p.\ 35]{Milnor_TopologyDifferentialTopology},
        \begin{equation}
          c_n\left(Z\right) = e\left(Z\right),
          \quad \mathrm{and} \quad
        c_n\left(S'\right) = e\left(S'\right). 
        \label{eq-euler-crepant-1}
        \end{equation}
        Since $K_Z \sim_{\mathbb{Q}} f^*K_S$, it follows from \cite[Cor.\ 2.31.(2), Cor.\ 2.63]{kollarMori_BirationalGeometry} that $f$ is an isomorphism away from $S^{\sing} = \{x_1, \ldots, x_s\}$.
        For each $1\leq i \leq s$, let $E_i := (f^{-1}(x_i))_{\mathrm{red}}$. Choose points $x_i'\in S'$ with $\pi(x_i') = x_i$, denote by $G_i\subseteq G$ the stabiliser group of $x_i'$ and let $V_i := T_{x_i'}S'$. By Luna's \'etale slice theorem \cite{Luna73}, the germ $(S, x_i)$ is \'etale locally isomorphic to $(V_i/G_i, 0)$. 
        For each $1\leq i \leq s$, let $f_i\colon Z_i \rightarrow V_i/G_i$ be a proper birational morphism such that $Z_i$ is smooth, $K_{Z_i} \sim_{\mathbb{Q}} f_i^*K_{V_i/G_i}$ and $(f^{-1}(0))_{\mathrm{red}} \cong E_i$. For example, if, locally near $x_i$, $f$ is given by the blow up of the ideal $\mathcal{I}_i\subseteq \mathcal{O}_S$ \cite[Thm. II.7.17]{Har77}, then we make take $f_i$ to be the blow up of the corresponding ideal in $\mathcal{O}_{V_i/G_i}$. In any case, by definition, 
        \[
        e_{\mathrm{Str}}(S) 
        := e\left(Z\right) 
        = e(S_{\mathrm{reg}}) + \sum_{i=1}^s e(E_i),
        \] 
        where we used \cite[p.\ 142]{Fulton_ToricVarieties}.
        Note that $Z_i$ deformation retracts onto $E_i$. Consequently,
        \[
        e(E_i)
        = e(Z_i)
        =: e_{\mathrm{Str}}(V_i/G_i).
        \]
        By \cite[Thm.\ 1.9]{Batyrev_MotivicIntegrationStringyEulerNumbers},
        \begin{equation*}
            e_{\mathrm{Str}}(V_i/G_i) = \# \Big\{ \big\{ hgh^{-1} \big| h\in G_i \big\} \Big| g\in G_i \Big\} =: m_i
        \end{equation*}
        is the number of conjugacy classes of $G_i$ under the action on itself by conjugation. In summary,
        \begin{equation}
           e(Z) = e_{\mathrm{Str}}(S) = e(S_{\mathrm{reg}}) + \sum_{i=1}^s m_i.
        \label{eq-euler-crepant-1.5}
        \end{equation}
        On the other hand, as $\pi$ is \'etale of degree $d$ over $S_{\reg}$,
        \begin{equation}
            e\big(S_{\reg}\big) 
        = \frac{1}{d}\ e\Big(\pi^{-1}\big(S_{\reg}\big)\Big) 
        = \frac{1}{d}\ \left(e(S') -  \sum_{i=1}^s \# \left(\pi^{-1}(x_i)\right)\right)
        = \frac{e(S')}{d} -  \sum_{i=1}^s \frac{1}{\#G_i}.
        \label{eq-euler-crepant-4}
        \end{equation}
        Combining \eqref{eq-euler-crepant-1}, \eqref{eq-euler-crepant-1.5} and \eqref{eq-euler-crepant-4}, we obtain the formula
        \begin{align*}
                c_n\left(Z\right) 
          = e\left(Z\right) 
        = e_{\mathrm{Str}}(S) 
        = e(S_{\reg}) + \sum_{i=1}^s m_i
        = \frac{c_n\left(S'\right)}{d} + \sum_{i=1}^s \left(m_i - \frac{1}{\# G_i}\right).
        \end{align*}
        Finally, observe that $m_i \geq 1$ and $\# G_i\geq 1$, with equality if and only if $G_i = \{1\}$. Hence,
        \[
        m_i - \frac{1}{\# G_i} \geq 0,
        \]
        with equality if and only if $G_i = \{1\}$. This, in turn, is equivalent to $\pi$ being \'etale over $x_i$, that is $x_i\in S$ being a smooth point. In summary,
        \[
        c_n\left(Z\right) \geq \frac{c_n\left(S'\right)}{d}
        \]
        with equality if and only if $S$ is smooth. In the latter case, $f$ is necessarily an isomorphism, see \cite[Cor.\ 2.31.(2), Cor.\ 2.63]{kollarMori_BirationalGeometry}. This concludes the proof.
    \end{proof}

    \begin{proof}[Proof of Theorem \ref{thm-inequalities-higher-Chern-classes}]
        We proceed by induction on $k\geq 2$. The case $k=2$ is just \cite{GKPT_HarmonicMetricsUniformisation}. Now, assume that $k>2$. According to \cite{GKPT_HarmonicMetricsUniformisation}, $B := X_{\mathrm{can}}\cong \B^n/\Lambda$ is a, possibly singular, ball quotient variety. By the induction hypothesis, $f\colon X\rightarrow B$ is an isomorphism over an open subset with complement of codimension at least $k$, and we need to show that
        \begin{equation}
            c_k(X)\cdot K_X^{n-k} \geq \frac{1}{(n+1)^k} \binom{n+1}{k}\cdot c_1(X)^k\cdot K_X^{n-k},
            \label{eq-uniformisation-canonical-models-to-prove}
        \end{equation}
        with equality if and only if $f\colon X\rightarrow B$ is an isomorphism over an open subset with complement of codimension at least $k+1$.

        Indeed, choose a finite quasi-\'etale Galois cover $\pi\colon B' \rightarrow B$ by a smooth ball quotient variety $B'$, of degree $d$ say.
        Fix a sufficiently large integer $\ell \geq 1$ and general hypersurfaces $A_1, \ldots, A_{n-k} \in |\ell K_{B}|$. Set
        \[
        S := A_1\cap \ldots \cap A_{n-k}, \quad 
        Z := f^{-1}(S), \quad \mathrm{and} \quad
        S' := \pi^{-1}(S).
        \]
        By construction, $Z$ is smooth and $f\colon Z \rightarrow S$ is a birational morphism which satisfies $K_Z \sim_{\mathbb{Q}}f^*K_S$ and is an isomorphism away from finitely many points $x_1, \ldots, x_s \in S$. In particular, $S$ has at most finitely many isolated singularities. Moreover, $\pi\colon S'\rightarrow S$ is a finite quasi-\'etale Galois cover, of degree $d$, by the smooth variety $S'$. 

        Now, consider the short exact sequence
        \[
        0 \rightarrow \mathcal{T}_{Z} \rightarrow \mathcal{T}_X|_{Z} \rightarrow \mathcal{N}_{Z/X} \rightarrow 0.
        \]
        Set $D_i := f^{-1}(A_i)$. Since the total Chern class is multiplicative on short exact sequences we see that
        \begin{align*}
            \begin{split}
                & \ell^{n-k} \cdot c_k(X)\cdot K_X^{n-k}
                 = c_k(X)\cdot D_1\cdots D_{n-k} 
                = c_k(X)|_{Z} \\
                & \qquad = \sum_{i=0}^k c_{k-i}(Z) \cdot c_{i}\Big(\mathcal{N}_{Z/X}\Big)
                = \sum_{i=0}^k \ell^i \cdot \binom{k}{i} \ c_{k-i}(Z) \cdot (K_X|_Z)^i.
            \end{split}
        \end{align*}
        Here, we used that $\mathcal{N}_{Z/X} \cong \mathcal{O}(D_1) \oplus \ldots \oplus \mathcal{O}(D_m)$ to obtain the final equality. In any case, a similar argument shows that
        \begin{gather*}
            c_k(B')\cdot K_{B'}^{n-k}
            = \frac{1}{\ell^{n-k}}
            \sum_{i=0}^k \ell^i \cdot \binom{k}{i}\cdot 
            c_{k-i}\big(S'\big)\cdot (K_{B'}|_{S'})^{i}.
        \end{gather*}
        As $f\colon Z\rightarrow S$ is an isomorphism away from finitely many points of $S$, we see that $d\cdot c_{k-i}(Z)\cdot (K_X|_Z)^{i} = c_{k-i}(S')\cdot (K_{B'}|_{S'})^{i}$ for all $i\geq 1$ and, consequently,
        \begin{gather}
            c_k(X)\cdot K_X^{n-k} - \frac{c_k(B')\cdot K_{B'}^{n-k}}{d} 
            = \frac{1}{\ell^{n-k}}\left(c_k(Z)  - \frac{c_k(S')}{d}\right) \geq 0.  
            \label{eq-uniformisation-canonical-models-formula-1}
        \end{gather}
        Here, the inequality on the right is due to Lemma \ref{lem-euler-characteristic-crepant-resolution}.
        On the other hand, since $B'$ is a smooth ball quotient variety, 
        \begin{align}
            c_{k}\big(B'\big)\cdot K_{B'}^{n-k} 
        = \frac{\binom{n+1}{k}}{(n+1)^k} \cdot c_1\big(B'\big)^k \cdot K_{B'}^{n-k}
        = \frac{d \cdot \binom{n+1}{k}}{(n+1)^k}\cdot c_1(X)^k \cdot K_X^{n-k}.
        \label{eq-uniformisation-canonical-models-formula-1'}
        \end{align}
        Putting \eqref{eq-uniformisation-canonical-models-formula-1} and \eqref{eq-uniformisation-canonical-models-formula-1'} together, we deduce that
        \[
        c_k(X)\cdot K_X^{n-k} \geq \frac{1}{(n+1)^k}\binom{n+1}{k}\cdot  c_1(X)^k\cdot K_X^{n-k}.
        \]
        In case of equality, Lemma \ref{lem-euler-characteristic-crepant-resolution} shows that $f\colon Z \rightarrow S$ is an isomorphism. Equivalently, $f\colon X \rightarrow B$ is an isomorphism away from a subset of codimension $k+1$ in $B$, as required.
    \end{proof}

    \begin{proof}[Proof of Theorem \ref{thm-main-result}]
        Follows immediately from Theorem \ref{thm-inequalities-higher-Chern-classes}.
    \end{proof}

%% file: IHCC.bib
@article {Aub78,
    AUTHOR = {Aubin, Thierry},
     TITLE = {{\'E}quations du type {M}onge-{A}mp\`ere sur les vari\'et\'es
              k\"ahl\'eriennes compactes},
   JOURNAL = {Bull. Sci. Math. (2)},
  FJOURNAL = {Bulletin des Sciences Math\'ematiques. 2e S\'erie},
    VOLUME = {102},
      YEAR = {1978},
    NUMBER = {1},
     PAGES = {63--95},
}

@article{Batyrev_MotivicIntegrationStringyEulerNumbers,
AUTHOR = {Batyrev, Victor},
     TITLE = {Non-{A}rchimedean integrals and stringy {E}uler numbers of log-terminal pairs},
   JOURNAL = {J. Eur. Math. Soc. (JEMS)},
  FJOURNAL = {Journal of the European Mathematical Society},
    VOLUME = {1},
      YEAR = {1999},
    NUMBER = {1},
     PAGES = {5--33},
       DOI = {10.1007/s00222-022-01096-y}
}

@Article{CartwrightSteger_EnumerationOfAllFakeProjectivePlanes,
 Author = {Cartwright, Donald and Steger, Tim},
 Title = {Enumeration of the 50 fake projective planes},
 FJournal = {Comptes Rendus. Math{\'e}matique. Acad{\'e}mie des Sciences, Paris},
 Journal = {C. R., Math., Acad. Sci. Paris},
 ISSN = {1631-073X},
 Volume = {348},
 Number = {1-2},
 Pages = {11--13},
 Year = {2010},
}

@article {C075,
    AUTHOR = {Chen, {B.} and Ogiue, Koichi},
     TITLE = {On compact {E}instein-{K}aehler manifolds},
   JOURNAL = {Proc. Amer. Math. Soc.},
  FJOURNAL = {Proceedings of the American Mathematical Society},
    VOLUME = {53},
      YEAR = {1975},
    NUMBER = {1},
     PAGES = {176--178},
}

@article{DHVW_StringsOnOrbifoldsI,
  AUTHOR = {Dixon, L. and Harvey, J. and Vafa,
              C. and Witten, E.},
     TITLE = {Strings on orbifolds {I}},
   JOURNAL = {Nucl. Phys.},
    VOLUME = {B 261},
      YEAR = {1985},
     PAGES = {678--686}
}

@article{DHVW_StringsOnOrbifoldsII,
  AUTHOR = {Dixon, L. and Harvey, J. and Vafa,
              C. and Witten, E.},
     TITLE = {Strings on orbifolds {II}},
   JOURNAL = {Nucl. Phys.},
    VOLUME = {B 274},
      YEAR = {1986},
     PAGES = {285--314}
}

@article {DeligneMostow_BallQuotients,
    AUTHOR = {Deligne, P. and Mostow, G.},
     TITLE = {Monodromy of hypergeometric functions and nonlattice integral
              monodromy},
   JOURNAL = {Inst. Hautes \'Etudes Sci. Publ. Math.},
  FJOURNAL = {Institut des Hautes \'Etudes Scientifiques. Publications
              Math\'ematiques},
    NUMBER = {63},
      YEAR = {1986},
     PAGES = {5--89},
      ISSN = {0073-8301,1618-1913},
   MRCLASS = {22E40 (32G20 33A30)},
MRREVIEWER = {Shigeaki\ Tsuyumine},
       URL = {http://www.numdam.org/item?id=PMIHES_1986__63__5_0},
}

@Article{FLNU_StringyChernClasses,
 Author = {{De Fernex}, Tommaso and Lupercio, Ernesto and Nevins, Thomas and Uribe, Bernardo},
 Title = {Stringy {Chern} classes of singular varieties},
 FJournal = {Advances in Mathematics},
 Journal = {Adv. Math.},
 ISSN = {0001-8708},
 Volume = {208},
 Number = {2},
 Pages = {597--621},
 Year = {2007},
 DOI = {10.1016/j.aim.2006.03.005},
 zbMATH = {5083643},
 Zbl = {1113.14008}
}

@book {Fulton_ToricVarieties,
    AUTHOR = {Fulton, William},
     TITLE = {Introduction to toric varieties},
    SERIES = {Annals of Mathematics Studies},
    VOLUME = {131},
      NOTE = {The William H. Roever Lectures in Geometry},
 PUBLISHER = {Princeton University Press, Princeton, NJ},
      YEAR = {1993},
     PAGES = {xii+157},
      ISBN = {0-691-00049-2},
   MRCLASS = {14M25 (14-02 14J30)},
MRREVIEWER = {T.\ Oda},
       DOI = {10.1515/9781400882526},
       URL = {https://doi.org/10.1515/9781400882526},
}

@article {GKPT_MY_Inequality_Uniformisation_of_Canonical_models,
    AUTHOR = {Greb, Daniel and Kebekus, Stefan and Peternell, Thomas and
              Taji, Behrouz},
     TITLE = {The {M}iyaoka-{Y}au inequality and uniformisation of canonical
              models},
   JOURNAL = {Ann. Sci. \'Ec. Norm. Sup\'er. (4)},
  FJOURNAL = {Annales Scientifiques de l'\'Ecole Normale Sup\'erieure.
              Quatri\`eme S\'erie},
    VOLUME = {52},
      YEAR = {2019},
    NUMBER = {6},
     PAGES = {1487--1535},
      ISSN = {0012-9593,1873-2151},
}

@incollection{GKT_OverviewUniformisation,
 author = {Greb, Daniel and Kebekus, Stefan and Taji, Behrouz},
 title = {Uniformisation of higher-dimensional minimal varieties},
 booktitle = {{P}roceedings of the 2015 summer research institute in algebraic geometry, {U}niversity of {U}tah, {S}alt {L}ake {C}ity, {UT}, {USA}, {P}art 1. {A}merican {M}athematical {S}ociety ({AMS}), {P}rovidence, {RI} and {C}lay {M}athematics {I}nstitute, {C}ambridge, {M}A},
 isbn = {978-1-4704-3577-6; 978-1-4704-4667-3; 978-1-4704-4678-9},
 pages = {277--308},
 year = {2018},
 publisher = {Providence, RI: American Mathematical Society (AMS); Cambridge, MA: Clay Mathematics Institute},
}

@Article{GKPT_HarmonicMetricsUniformisation,
 Author = {Greb, Daniel and Kebekus, Stefan and Peternell, Thomas and Taji, Behrouz},
 Title = {Harmonic metrics on {Higgs} sheaves and uniformization of varieties of general type},
 FJournal = {Mathematische Annalen},
 Journal = {Math. Ann.},
 ISSN = {0025-5831},
 Volume = {378},
 Number = {3-4},
 Pages = {1061--1094},
 Year = {2020},
 DOI = {10.1007/s00208-019-01906-4},
}

@book {Har77,
    AUTHOR = {Hartshorne, Robin},
     TITLE = {Algebraic geometry},
    SERIES = {Graduate Texts in Mathematics},
    VOLUME = {No. 52},
 PUBLISHER = {Springer-Verlag, New York-Heidelberg},
      YEAR = {1977},
     PAGES = {xvi+496},
}

@Article{Hunt_GeographyOfComplexSurfacesAnd3Folds,
 Author = {Hunt, Bruce},
 Title = {Complex manifold geography in dimension 2 and 3},
 FJournal = {Journal of Differential Geometry},
 Journal = {J. Differ. Geom.},
 ISSN = {0022-040X},
 Volume = {30},
 Number = {1},
 Pages = {51--153},
 Year = {1989},
 DOI = {10.4310/jdg/1214443287},
 zbMATH = {4171097},
 Zbl = {0712.14022}
}

@article{KO80,
 author = {Kobayashi, Shoshichi and Ochiai, Takushiro},
 title = {Holomorphic projective structures on compact complex surfaces},
 fjournal = {Mathematische Annalen},
 journal = {Math. Ann.},
 issn = {0025-5831},
 volume = {249},
 pages = {75--94},
 year = {1980},
}

@article{KollarMatsusaka_RRInequalities,
 author = {Koll{\'a}r, J. and Matsusaka, T.},
 title = {Riemann-{Roch} type inequalities},
 fjournal = {American Journal of Mathematics},
 journal = {Am. J. Math.},
 issn = {0002-9327},
 volume = {105},
 pages = {229--252},
 year = {1983},
 doi = {10.2307/2374387},
 zbMATH = {3855264},
 Zbl = {0538.14006}
}

@book {kollarMori_BirationalGeometry,
    AUTHOR = {Koll\'{a}r, J\'{a}nos and Mori, Shigefumi},
     TITLE = {Birational geometry of algebraic varieties},
    SERIES = {Cambridge Tracts in Mathematics},
    VOLUME = {134},
 PUBLISHER = {Cambridge University Press, Cambridge},
      YEAR = {1998}
}

@book {lazarsfeld_PositivityI,
    AUTHOR = {Lazarsfeld, Robert},
     TITLE = {Positivity in algebraic geometry. {I}},
    SERIES = {Ergebnisse der Mathematik und ihrer Grenzgebiete. 3. Folge. A
              Series of Modern Surveys in Mathematics},
    VOLUME = {48},
      NOTE = {Classical setting: line bundles and linear series},
 PUBLISHER = {Springer-Verlag, Berlin},
      YEAR = {2004}
}

@article{LU_RRTypeInequalities,
 author = {Luo, Tie},
 title = {Riemann-{Roch} type inequalities for nef and big divisors},
 fjournal = {American Journal of Mathematics},
 journal = {Am. J. Math.},
 issn = {0002-9327},
 volume = {111},
 number = {3},
 pages = {457--487},
 year = {1989},
 doi = {10.2307/2374669},
 zbMATH = {4130532},
 Zbl = {0691.14005}
}

@article{LTZ_CanonicalClassInequalityFibredThreefolds,
 author = {Lu, Jun and Tan, Sheng-Li and Zuo, Kang},
 title = {Canonical class inequality for fibred spaces},
 fjournal = {Mathematische Annalen},
 journal = {Math. Ann.},
 issn = {0025-5831},
 volume = {368},
 number = {3-4},
 pages = {1311--1332},
 year = {2017},
 doi = {10.1007/s00208-016-1474-2},
 zbMATH = {6774921},
 Zbl = {1401.14054}
}

@article{LuXiao_InequalitiesOfRiemannRochType,
 author = {Lu, Xing and Xiao, Jian},
 title = {The inequalities of {Chern} classes and {Riemann}-{Roch} type inequalities},
 fjournal = {Advances in Mathematics},
 journal = {Adv. Math.},
 issn = {0001-8708},
 volume = {458},
 pages = {24},
 note = {Id/No 109982},
 year = {2024},
 doi = {10.1016/j.aim.2024.109982},
}

@article{Luna73,
 author = {Luna, Domingo},
 title = {Slices {\'e}tal{\'e}s},
 fjournal = {Bulletin de la Soci{\'e}t{\'e} Math{\'e}matique de France. Suppl{\'e}ment. M{\'e}moires},
 journal = {Bull. Soc. Math. Fr., Suppl., M{\'e}m.},
 issn = {0583-8665},
 volume = {33},
 pages = {81--105},
 year = {1973},
}

@book {Milnor_TopologyDifferentialTopology,
    AUTHOR = {Milnor, John},
     TITLE = {Topology from the differentiable viewpoint},
      NOTE = {Based on notes by David W. Weaver},
 PUBLISHER = {University Press of Virginia, Charlottesville, VA},
      YEAR = {1965},
     PAGES = {ix+65},
   MRCLASS = {57.10},
}

@article{Miyaoka_ChernNumbersSurfacesGeneralType,
 author = {Miyaoka, Yoichi},
 title = {On the {Chern} numbers of surfaces of general type},
 fjournal = {Inventiones Mathematicae},
 journal = {Invent. Math.},
 issn = {0020-9910},
 volume = {42},
 pages = {225--237},
 year = {1977},
 doi = {10.1007/BF01389789},
 url = {https://eudml.org/doc/142501},
 zbMATH = {3582298},
 Zbl = {0374.14007}
}

@misc{Mul_Thesis,
 author = {Niklas M{\"u}ller},
 title = {Inequalities of {Miyaoka}-{Yau} type {$\&$} {Uniformisation} of varieties of intermediate {Kodaira} {Dimension}},
 year = {2026},
 note = {Preprint \href{https://doi.org/10.48550/arXiv.2601.15138}{arXiv:2601.15138}},
}

@Article{PrasadYeung_FakeProjectivePlanes,
 Author = {Prasad, Gopal and Yeung, Sai-Kee},
 Title = {Fake projective planes},
 FJournal = {Inventiones Mathematicae},
 Journal = {Invent. Math.},
 ISSN = {0020-9910},
 Volume = {168},
 Number = {2},
 Pages = {321--370},
 Year = {2007},
 DOI = {10.1007/s00222-007-0034-5},
 zbMATH = {5151314},
 Zbl = {1253.14034}
}

@article{PF_ChernCLassesPolarisedManifolds,
 author = {Li, Ping and Zheng, Fangyang},
 title = {Chern class inequalities on polarized manifolds and nef vector bundles},
 fjournal = {IMRN. International Mathematics Research Notices},
 journal = {Int. Math. Res. Not.},
 issn = {1073-7928},
 volume = {2022},
 number = {8},
 pages = {6262--6288},
 year = {2022},
 doi = {10.1093/imrn/rnaa317},
 zbMATH = {7510521},
 Zbl = {1495.32055}
}

@article{SchreiederTasin_ChernClassesUnbounded,
 author = {Schreieder, Stefan and Tasin, Luca},
 title = {Algebraic structures with unbounded {Chern} numbers},
 fjournal = {Journal of Topology},
 journal = {J. Topol.},
 issn = {1753-8416},
 volume = {9},
 number = {3},
 pages = {849--860},
 year = {2016},
 doi = {10.1112/jtopol/jtw011},
 zbMATH = {6640415},
 Zbl = {1405.32035}
}

@article {Yau_KEMetrics,
    AUTHOR = {Yau, Shing Tung},
     TITLE = {Calabi's conjecture and some new results in algebraic
              geometry},
   JOURNAL = {Proc. Nat. Acad. Sci. U.S.A.},
  FJOURNAL = {Proceedings of the National Academy of Sciences of the United
              States of America},
    VOLUME = {74},
      YEAR = {1977},
    NUMBER = {5},
     PAGES = {1798--1799},
      ISSN = {0027-8424},
}

@article{ZaslowStringyInvariants,
    AUTHOR = {Zaslow, E.},
     TITLE = {Topological orbifold models and quantum cohomology rings},
   JOURNAL = {Commun. Math. Phys.},
    VOLUME = {156},
      YEAR = {1993},
     PAGES = {301–-331},
}

@Misc{DM_ToledoInvariantsTQFT,
 Author = {Deroin, Bertrand and March{\'e}, Julien},
 Title = {Toledo invariants of {Topological} {Quantum} {Field} {Theories}},
 Year = {2022},
note = {Preprint \href{https://doi.org/10.48550/arXiv.2207.09952}{arXiv:2207.09952}},
}

@misc{DuSun_ChernClassInequalities,
 author = {Du, Rong and Sun, Hao},
 title = {Inequalities of {Chern} classes on nonsingular projective $n$-folds of {Fano} and general type with ample canonical bundle},
 year = {2017},
 note = {Preprint \href{https://doi.org/10.48550/arXiv.1712.03458}{arXiv:1712.03458}},
}

@misc{JR02,
 author = {Jahnke, Priska and Radloff, Ivo},
 title = {On manifolds with holomorphic normal projective connections},
 year = {2002},
 note = {Preprint \href{https://doi.org/10.48550/arXiv.0903.4571}{arXiv:0903.4571}},
}

@misc{Sun_bogomolovtypeinequalitiesfrobenius,
      title={Bogomolov type inequalities and {F}robenius semipositivity}, 
      author={Hao Sun},
      year={2025},
      note = {Preprint \href{https://doi.org/10.48550/arXiv.2504.16829}{arXiv:2504.16829}}
}
